\newtheorem{lem}{Lemma}[section]
\newtheorem{theo}[lem]{Theorem}
\newtheorem{coro}[lem]{Corollary}
\newtheorem{propo}[lem]{Proposition}
\newtheorem{rema}[lem]{Remark}
\newtheorem{defi}[lem]{Definition}
\newtheorem{ex}[lem]{Example}
\newtheorem{Question}[lem]{Question}
\newenvironment{proof}{\paragraph*{Proof}}
{\par}
\newcommand\GL{{\mathrm{GL}}}
\newcommand\PSL{{\mathrm{PSL}}}
\newcommand\PGL{{\mathrm{PGL}}}
\newcommand\GSP{{\mathrm{GSp}}}
\newcommand\End{{\mathrm{End}}}
\newcommand\Gal{{\mathrm{Gal}}}
\newcommand\Spec{{\mathrm{Spec}}}
\newcommand\eps\varepsilon
\newcommand\ph\varphi
\newcommand\N{{\mathbb N}}
\newcommand\T{{\mathbb T}}
\newcommand\F{{\mathbb F}}
\newcommand\Q{{\mathbb Q}}
\newcommand\PPP{{\mathbb P}}
\newcommand\Z{{\mathbb Z}}
\newcommand\gerP{{\mathfrak P}} 
\newcommand\gerA{{\mathfrak A}} 
\newcommand\gerS{{\mathfrak S}}
\title{On uniform large Galois images for modular abelian varieties}
\author{E. Ghate, P. Parent}
\begin{document}

\maketitle 
\begin{abstract}
We formulate a question regarding uniform versions of ``large Galois image properties"
for modular abelian varieties of higher dimension, generalizing the well-known case of elliptic curves.
We then answer our question affirmatively in the exceptional image case, and provide lower estimates 
for uniform bounds in the remaining cases. 
\medskip

AMS 2000 Mathematics Subject Classification  11F03 (primary), 11F80, 11G10 (secondary). 
\end{abstract}

\section{Introduction}
\label{Introduction}

Let~$A$ be a simple abelian variety of dimension~$d$ over~$\Q$,  without complex multiplication 
over~$\overline{\Q}$. The images of the Galois representations
$$
\rho_{A,p} \colon G_\Q :=\Gal (\overline{\Q} /\Q )\to \GL (T_p (A) )
$$ 
defined by the $p$-adic Tate modules of~$A$, for~$p$ running through the set of primes, are expected to be 
``generically big". The most famous instance of such a statement is probably Serre's result on elliptic curves~\cite{Se72}, 
which shows that, given a non-CM elliptic curve~$A$ (over any number field) and large enough~$p$ relatively to $A$, the 
image ${\rho}_{A,p} (G_\Q )$ is all of $\GL (T_p (A)) \simeq \GL_2 (\Z_p)$.  

  When~$A$ is a higher dimensional abelian variety, endowed with a polarization~$e$, the existence of the Weil pairing imposes as a first 
constraint that the image of $\rho_{A,p}$ lands in~$\GSP (T_p (A),e)$. For $A$ such that~$\End_{\overline{\Q}} (A)\otimes \Q =\Q$, 
Serre partly extended his result for elliptic curves: when~$d$ is odd, or~$d=2$ or~$6$, he showed that $\rho_{A,p} (G_\Q )$ 
equals~$\GSP_{2d} (\Z_p )$, for $p$ large enough relative to~$A$ (see~\cite{Se82}, Th\'eor\`eme 3).     
     
  The orthogonal situation, that is, when~$A$ has as many $\Q$-endomorphisms as possible, 
has also been worked out. Recall a simple abelian variety~$A$ over~$\Q$ is said to be {\it of~$\GL_2$-type} 
if~${\End_\Q (A)\otimes \Q}$ is a number field~$E$ of (maximal) degree~$[E:\Q ]=\mathrm{dim} (A)$. A result of Ribet 
(cf. \cite{Ri04}, Theorem 4.4) implies that this is equivalent to $A$ being {\it modular}, i.e., a quotient of some  modular 
jacobian $J_1 (N)$. This result was conditional on Serre's modularity conjecture which has since been proven by 
Khare-Wintenberger and Kisin (see \cite{KW09} and the references therein). Then, as the name indicates, the image of the representation
$$
\rho_{A,p} \colon G_\Q \to \GL (T_p (A)) 
$$ 
lands in $\GL_2 (E \otimes \Q_p )$, and the image inside this last group is still expected to be ``generically big". Indeed, the unavoidable 
constraint on the image given by the prescribed form of the determinant was shown by Momose, Papier and Ribet to be, asymptotically, 
the main one:
\begin{theo} 
\label{RiMo} 
({\em \cite{Ri85}}, Theorem 3.1)  
Let~$f = \sum a_n(f) q^n$ be a non-CM newform of level $N \geq 1$ and weight~${k \geq 2}$. Let~$E =\Q (a_n (f))$ be the number field generated by its Fourier 
coefficients.  
There is a subfield~$F$ of~$E$, the fixed field of the automorphisms of $E$ corresponding to the extra twists
of~$f$ (so that $F = E$ when $f$ has no extra twists), 
and an abelian number field $K$, cut out by the Dirichlet characters corresponding to the extra twists 
of $f$, such that the following holds: for large enough $p$ relative to~$f$, the image of the restriction to $G_K :=  {\mathrm{Gal}}(\overline{\Q} /K)$ 
of the Shimura-Deligne representation~${\rho_{f,p} \colon G_\Q \to \GL_2 ({\cal O}_E \otimes_\Z \Z_p )}$ lands in~$\GL_2 ({\cal O}_{F} \otimes \Z_p)$, 
and is equal to 
$$
\{  u\in \GL_2 ({\cal O}_{F} \otimes_\Z \Z_p ) {\mathrm{\ such\ that\ }} \det (u)\in \Z_p^{*(k-1)}  \} .
$$  
\end{theo}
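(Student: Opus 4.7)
The plan is to proceed in three stages: first show that $\rho_{f,p}|_{G_K}$ takes values in $\GL_2(\OO_F \otimes \Z_p)$ with determinant in $\Z_p^{*(k-1)}$, giving the easy inclusion ``$\subseteq$"; then analyze the residual representation $\bar\rho_{f,p}$ to show that its image is as big as possible modulo those constraints; finally, lift to characteristic zero to get the full image.

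For the containment, I would exploit the defining property of the extra twists: for each $\sigma \in \Gal(E/F)$ there exists a Dirichlet character $\chi_\sigma$ (whose conductor divides the conductor of $K/\Q$) such that $\sigma(a_\ell(f)) = \chi_\sigma(\ell) a_\ell(f)$ for almost all primes $\ell$. By Chebotarev and continuity, this translates at the level of Galois representations into an isomorphism $\sigma \circ \rho_{f,p} \cong \rho_{f,p} \otimes \chi_\sigma$. Restricting to $G_K$, where by construction every $\chi_\sigma$ becomes trivial, makes $\rho_{f,p}|_{G_K}$ invariant (up to conjugation) under the action of $\Gal(E/F)$ on coefficients. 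A standard descent then forces the image to live in $\GL_2(\OO_F \otimes \Z_p)$. The determinant of $\rho_{f,p}$ is $\chi_{\mathrm{cyc}}^{k-1}\eps_f$ where $\eps_f$ is the nebentypus; since $K$ contains the field cut out by $\eps_f$ (which is one of the $\chi_\sigma$), the restricted determinant is $\chi_{\mathrm{cyc}}^{k-1}$, landing in $\Z_p^{*(k-1)}$.

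For the reverse inclusion, I would study $\bar\rho_{f,p}|_{G_K}$ prime by prime above $p$. Irreducibility of $\bar\rho_{f,p}$ for large $p$ is standard (Ribet), so Dickson's classification of subgroups of $\PGL_2(\overline{\F}_p)$ leaves three possibilities for the projective image at each $\lambda\mid p$: containing $\PSL_2$ of the residue field; dihedral (normalizer of a Cartan); or exceptional ($A_4$, $S_4$, $A_5$). The dihedral case would produce an additional quadratic inner twist of $f$ not already accounted for among the extra twists, and hence either forces CM (excluded) or enlarges $K$ (contradicting its minimal definition) — ruling it out for $p$ large. The exceptional case is eliminated uniformly because the traces $a_\ell \bmod p$ would then lie in a finite set of algebraic numbers of bounded degree, which cannot accommodate the Hecke eigenvalues for $p$ larger than an explicit bound. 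Thus for $p$ large the projective image at each $\lambda$ is full. A Goursat-style independence argument across the various $\lambda \mid p$, exploiting the non-isomorphism of the residue representations (provable via Brauer-Nesbitt and the distinctness of the Galois conjugates $\sigma \circ \bar\rho_{f,\lambda}$ for $\sigma \notin \Gal(E/F)$), then gives surjectivity onto the full residual subgroup with the prescribed determinant.

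The lift from residual surjectivity to $p$-adic surjectivity is the standard Frattini/Nakayama argument for pro-$p$ kernels in $\SL_2$, once $p\geq 5$. The main obstacle in this program is step two, and specifically the dihedral elimination: one has to verify carefully that the extra twists built into the definitions of $F$ and $K$ already account for \emph{every} quadratic twist relation that $\bar\rho_{f,p}$ could satisfy for infinitely many $p$, which is exactly the content of Momose's and Ribet's analysis and is what makes the theorem non-formal.
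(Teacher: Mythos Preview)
The paper does not prove this theorem at all: it is quoted as a black-box result from Ribet \cite{Ri85}, Theorem~3.1, and immediately after the statement the authors write that they ``content ourselves here with this somewhat imprecise statement in order to avoid recalling too many definitions''. There is therefore no proof in the paper to compare your proposal against.

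That said, your outline is broadly the Momose--Ribet strategy and is a reasonable sketch of how the original argument proceeds. A few points where you are a bit loose: the descent of $\rho_{f,p}|_{G_K}$ from $\GL_2(\OO_E\otimes\Z_p)$ to $\GL_2(\OO_F\otimes\Z_p)$ is not quite a ``standard descent'' --- one has to be careful that the conjugating matrices implementing $\sigma\circ\rho\cong\rho\otimes\chi_\sigma$ form a genuine cocycle and that the relevant $H^1$ vanishes (this is where Ribet does real work, and why the statement is only claimed for $p$ large); and the nebentypus $\eps_f$ need not literally be one of the $\chi_\sigma$, though the fields they cut out are related in the way you need. Your dihedral-elimination step correctly identifies the heart of the matter, and you are right that this is exactly the content of Momose's and Ribet's analysis rather than something formal.
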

We content ourselves here with this somewhat imprecise 
statement in order to avoid recalling too many definitions, and we further note that~\cite{Ri85}, 
Theorem 4.1 describes the image $\rho_{f,p} (G_\Q)$ completely. 
Specializing the above result in weight~$k=2$ yields the desired ``big image result" 
for simple non-CM abelian varieties~$A$ of~$\GL_2$-type.

  Now, a natural question is whether or not there are {\it uniform} versions for these large image theorems. Even
more, one might first ask what are the correct questions in the abelian variety setting. Going back again to the case of 
elliptic curves, Serre asked if one can find an absolute constant~$C$ 
(that is, independent of the non-CM rational elliptic curve $A$) such that, for~$p>C$, the representation 
$\rho_{A,p}$ (or, equivalently, the residual representation $\overline{\rho}_{A,p}$) is surjective (see~\cite{Se72}, p.~299 or~\cite{Se81}, p.~199).  

The following classification due to Dickson on finite subgroups of $\PGL_2(\overline{\F}_p)$ (cf. e.g.~\cite{Hu83}, Satz 8.27) allows one 
to break up Serre's question further:
up to conjugacy every finite subgroup~$H$ of $\PGL_2 (\overline{\F}_p )$, for $p$ an odd prime, is contained in either:
\begin{enumerate}
\item an {\em exceptional group}, that is a permutation group isomorphic to~${\gerA}_4$, $\gerS_4$ or~$\gerA_5$;
\item a {\em Borel subgroup} (that is, a finite subgroup of upper triangular matrices);
\item a {\em dihedral group} $D_r$, for some $r \in \N$ not divisible by~$p$;
\item $\PSL_2 (\F_{p^r} )$ or $\PGL_2 (\F_{p^r} )$, for some $r \in \N$.
\end{enumerate}
For elliptic curves $A$, when considering the image of~$\overline{\rho}_{A,p} \subset \GL_2 (\F_{p}) $ (and not its projectivization~$\PPP \overline{\rho}_{A,p}$), it is natural 
to divide case~$(c)$ into two further subcases, depending on wether the image belongs to the normalizer of a {\it split} or {\it nonsplit} Cartan 
subgroup of~$\GL_2 (\F_p )$. More generally, this distinction occurs as soon as there is a natural {\it finite} basefield~$\F_q$ for our 
representation (as will be the case for us): Cartan subgroups in the general case are by definition maximal subtori of~$\GL_n (\F_q )$, and
when~$n=2$ they can only be either split (that is, isomorphic to~$\F_q^* \times \F_q^*$) or nonsplit ($\simeq \F_{q^2}^*$).

   Serre's question for elliptic curves therefore boils down to asking for absolute upper bounds on
primes $p$ such that the images of $\overline{\rho}_{A,p}$, as $A$ varies over all non-CM elliptic curves over $\Q$, 
are contained in groups of the above types: namely, exceptional, Borel, or 
normalizer of split or nonsplit Cartan subgroups. The exceptional cases~$(a)$ are relatively easy to rule out for 
elliptic curves over arbitrary number 
fields (see~\cite{Ma76}, p. 36). For elliptic curves over~$\Q$ it is known that such absolute upper bounds 
also exist in cases~$(b)$ (\cite{Ma78}), and~$(c)$ when the Cartan group is split~(\cite{BP09}), but we still do not know if there are rational non-CM 
elliptic curves with normalizer of nonsplit Cartan structure modulo arbitrarily large~$p$. 
 
   For a higher-dimensional~$A$ which is (and, from this point on, will always be) a simple abelian variety of~$\GL_2$-type over~$\Q$ without 
complex multiplication, set $E:={\End_\Q (A)\otimes \Q}$,
and consider the residual representation
$$
\overline{\rho}_{A,\gerP} := \rho_{A,p} \!\!\! \mod \gerP \colon G_\Q \to \GL (A[\gerP ] ) =  \GL_2 (\F_{p^n } ),
$$
for $\gerP$ a prime of $E$ above $p$, with residue field $\F_{p^n} = \F_\gerP :={\cal O}_{F,\gerP} /\gerP$. Using Dickson's theorem, we will say a simple and non-CM abelian variety~$A$ 
of~$\GL_2$-type has {\em big image mod~$p$} if the projectivization~$\PPP \overline{\rho}_{A,\gerP}$ of the above~$\overline{\rho}_{A,\gerP}$ 
contain~$\PSL_2 (\F_{p})$ for all primes~$\gerP$ of $E$ of characteristic~$p$. 
Theorem~\ref{RiMo} shows that each~$A$ as above has big image 
mod~$p$, for  $p$ large enough {\it relative to}~$A$. 

   Can this dependence on~$A$ be removed? Keeping in mind that even the simplest situation of rational elliptic curves still remains not 
completely understood, we might nevertheless consider the general case, and the motivation for this note is a very preliminary study of what 
could happen for higher dimensional abelian varieties of~$\GL_2$-type. In this setting, as far as we are aware, virtually nothing is known 
regarding uniform results. As we will see below, if uniform bounds are to be expected they must at least depend on the dimension $d$ of the 
abelian varieties. We feel the most natural statement is the following.  
\begin{Question}
\label{Question1}
Does there exist a function~$B(d)$ for~$d\in \N$ such that, if~$A$ is a rational simple non-CM abelian variety of~$\GL_2$-type with 
dimension less than or equal to $d$, then for all places $\gerP$ of~$E := {\End_\Q (A)\otimes \Q}$, the image of~$\PPP \overline{\rho}_{A,\gerP}$
contains~$\PSL_2 (\F_{p})$ if~${\mathrm{char}} (\gerP )=:p>B(d)$?
\end{Question}
As above, it helps to divide Question~\ref{Question1} according to Dickson's theorem. 
Here, as alluded to above, we speak of split or nonsplit Cartan depending on whether the implicit maximal torus is split or nonsplit over the base 
field~$\F_\gerP$. In the sequel we write $B$ for ``Borel", $E$ for ``exceptional", $SP$ for ``normalizer of split Cartan" and $NSP$ for ``normalizer 
of non-split Cartan".
\begin{Question}
\label{Question2}
For~$*= B, E, SP$ or $NSP$, does there exist a 
function~$B_* (d)$ for~$d\in \N$, such that, if~$A$ is a simple rational non-CM abelian variety 
of~$\GL_2$-type with dimension less than or equal to $d$, then $\overline{\rho}_{A,\gerP}$ cannot have image contained
in a subgroup of type $*$ as soon as ${\mathrm{char}}(\gerP )=:p>B_* (d)$?
\end{Question}
In cases $SP$ and $NSP$, these questions can be reinterpreted in terms of 
Hecke algebras (see Remark~\ref{RemarkFusion} in the last section).

   In this paper we first give an effective positive answer to Question~$\ref{Question2}$ in the easiest case of exceptional groups. We then 
give negative results (i.e., lower bounds for the expected~$B_* (d)$) in the three other cases. The Borel case can be deduced in a 
straightforward way from Ribet's famous work 
\cite{R76a} on the converse to Herbrand's criterion. For the dihedral case, we estimate how other classical results of Ribet on level 
rising~(\cite{Ri83}, or~\cite{Ri90} Theorem 7.3) can be used. We thus produce families of varieties with controlled dimension. But there we also
exhibit a more efficient technique: we show that Hida theory gives further information towards Question~\ref{Question2} in the dihedral case. This last 
technique gives sharper results (than level raising) on the quantitative side, but it does not seem to distinguish easily between the split and nonsplit
subcases. 

   Putting all this together yields the following theorem.
\begin{theo}
\label{mt}
We have:
\begin{enumerate}
\item {\bf (Exceptional subgroups).} Assume $A$ is a rational simple abelian variety of $\GL_2$-type without complex multiplication of 
dimension~$d$, endowed with a Galois structure of exceptional type modulo some prime $p$. Then~$p$ is bounded above in terms of $d$, 
and more precisely, for~$d$ large enough one has 
$$
p \leq 5 \cdot 3^{4d}.
$$ 
\item {\bf (Borel subgroups).}
There is an infinite sequence of prime numbers~$p$ for each of which there is a rational simple abelian variety of $\GL_2$-type 
without complex multiplication endowed with a Borel structure modulo $p$, whose dimension $d$
satisfies
$$
d \leq \frac{(p-5)(p-7)}{24} .
$$
\item {\bf (Dihedral subgroups, I).} 
For each sufficiently large prime~$p$, there is a rational simple abelian variety of $\GL_2$-type 
without complex multiplication, endowed with a normalizer-of-nonsplit-Cartan structure modulo $p$ 
(respectively, a normalizer-of-split-Cartan structure modulo $p$), whose dimension $d$
satisfies
$$
  d \leq C \cdot p^{11/2}  
$$
for some absolute constant~$C$.
\item {\bf (Dihedral subgroups, II).} 
For each large enough prime~$p\equiv 3\mod 4$ there is a rational simple abelian variety of $\GL_2$-type without complex multiplication endowed 
with a projectively dihedral structure modulo $p$,  whose dimension $d$ satisfies
$$
d \leq \frac{(p-5)(p-7)}{24} .
$$
\end{enumerate} 
\end{theo}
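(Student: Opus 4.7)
If $\PPP\overline{\rho}_{A,\gerP}$ is exceptional, then for every prime $\ell$ of good reduction the projective image of $\mathrm{Frob}_\ell$ will have order $n\in\{1,2,3,4,5\}$, so $\mathrm{Frob}_\ell^n$ must act as a scalar modulo $\gerP$. This will force
$$
T_n(a_\ell,\ell)^2 \equiv 4\ell^n \pmod{\gerP},
$$
where $T_n\in\Z[x,y]$ is the Chebyshev-type polynomial characterised by $T_n(\alpha+\beta,\alpha\beta)=\alpha^n+\beta^n$. Multiplying these five possibilities produces a polynomial $P(a_\ell,\ell)\in\Z[x,y]$ divisible by $\gerP$. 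My plan is then to specialise to $\ell=3$, apply the Weil bound $|\sigma(a_3)|\leq 2\sqrt{3}$ at each complex embedding $\sigma\colon E\hookrightarrow\C$, and take norms from $\OO_E$ to $\Z$ to deduce $p\leq N_{E/\Q}(P(a_3,3))\leq 5\cdot 3^{4d}$ after a careful accounting, the constant $5$ absorbing the $\gerA_5$ subcase (where $c=(3\pm\sqrt{5})/2$ introduces $\sqrt{5}$). The delicate point will be ensuring $P(a_3,3)$ is nonzero in $\OO_E$; this should follow from the projective nontriviality of $\overline{\rho}_{A,\gerP}$, with a fallback to $\ell=2$ available if the generic case degenerates.

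\textbf{Part (b).} For the Borel case I would invoke Ribet's converse to Herbrand \cite{R76a}. For each irregular prime $p$ (of which there are infinitely many by classical results), his Eisenstein--cusp form congruence produces a normalised cuspidal eigenform whose mod $p$ Galois representation is a nontrivial extension of two characters and hence has Borel image; specialising to weight $2$ via Hida theory (the form is $p$-ordinary, its reduction being Eisenstein) will give a weight-$2$ form $f$ of level dividing $p$. The simple factor $A_f\hookrightarrow J_1(p)$ will then have dimension $[\Q(a_n(f)):\Q]\leq g(X_1(p))=(p-5)(p-7)/24$.

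\textbf{Part (c).} I would start from a weight-$2$ CM newform $f_0$ attached to a Hecke character of a fixed imaginary quadratic field $K$, whose mod $\gerP$ image is automatically dihedral, of split-Cartan-normaliser type when $p$ splits in $K$ and nonsplit type when $p$ is inert. Applying Ribet's level-raising theorem \cite{Ri83,Ri90} at an auxiliary prime $l\nmid N_0p$ with $a_l(f_0)\equiv\pm(l+1)\pmod{\gerP}$ will produce a non-CM newform $f$ of level $N_0l$ congruent to $f_0$ mod $\gerP$ and sharing its dihedral residual image. The estimate $\dim A_f\leq\dim S_2(\Gamma_1(N_0l))\ll(N_0l)^2$ combined with an effective Chebotarev bound (Lagarias--Odlyzko) controlling $l$ polynomially in $p$ will give $d\leq C\cdot p^{11/2}$. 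The main obstacle is imposing the splitting condition in $K$ simultaneously with the level-raising congruence; the tension between these constraints is what produces the unusual exponent $11/2$.

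\textbf{Part (d).} For $p\equiv 3\pmod 4$, let $K=\Q(\sqrt{-p})$ and consider a dihedral $\overline{\rho}$ induced from a character of $G_K$. In place of level raising I would use Hida theory: the universal $p$-ordinary deformation ring $R^{\mathrm{ord}}_{\overline{\rho}}$ decomposes into a CM component (corresponding to Hecke characters of $K$) and a non-CM component of positive relative dimension, and specialising the latter at weight $2$ will produce a non-CM weight-$2$ ordinary eigenform $f$ of level $p$ with $\overline{\rho}_f=\overline{\rho}$. Then $A_f\hookrightarrow J_1(p)$ will give the sharper bound $d\leq(p-5)(p-7)/24$. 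The main obstacle will be establishing the non-emptiness of the non-CM locus at weight $2$, which follows from a dimension count in Hida's big ordinary Hecke algebra; the Hida construction cannot distinguish the split from the nonsplit Cartan subcase, which is why part~(c) is retained separately.
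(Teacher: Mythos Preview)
Your approach to part~(a) is genuinely different from the paper's and contains a real gap. The paper works at the prime~$p$ itself: after passing to an extension $K/\Q_p$ of degree $e\le 3^{4d}$ where $A$ acquires semistable reduction, it uses Raynaud's classification of finite flat group schemes to show that tame inertia acts on $A[\gerP]$ through fundamental characters raised to powers $\le e$, forcing the projective inertia image to contain a cyclic group of order $\ge (p\pm 1)/e$; this is incompatible with an exceptional image once $p>5e$. Your Frobenius-trace approach at a fixed small $\ell$ fails for two reasons. First, the conductor $N$ is not bounded in terms of $d$, so there is no way to guarantee that $\ell=3$ (or $\ell=2$) is a prime of good reduction. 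Second, and more seriously, even at a good prime $\ell$ the quantity $P(a_\ell,\chi(\ell)\ell)$ (note the nebentypus, which you omit) equals $\prod_{n\le 5}(\alpha^n-\beta^n)^2$ and vanishes in characteristic zero whenever $\mathrm{Frob}_\ell$ happens to have projective order $\le 5$ in the $\gerP$-adic representation; nothing about the \emph{residual} nontriviality of $\overline\rho$ prevents this. One can argue that not all $\ell$ can be bad (else the projective $\gerP$-adic image would be finite, contradicting non-CM), but then the smallest usable $\ell$ depends on $A$, destroying uniformity.

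For part~(c) your strategy (level-raise a fixed CM form) is exactly the paper's, but the analytic inputs are off. The exponent $11/2$ is not produced by Lagarias--Odlyzko effective Chebotarev, which gives nothing polynomial unconditionally; it is Heath--Brown's Linnik constant $5.5$ for the least prime $\ell$ in the progression $\ell\equiv -1\pmod{p}$ (plus a fixed splitting condition in $K$). Moreover the level-raised form lives in $S_2(\Gamma_0(\ell N_D))$ with trivial nebentypus, whose dimension is $O(\ell N_D)$, not $O((\ell N_D)^2)$; with $\ell\ll p^{5.5}$ this gives $d\ll p^{5.5}$ directly. Parts~(b) and~(d) are essentially the paper's arguments: in~(b) Ribet's form is already of weight~$2$, so no Hida step is needed, and one must also check non-CM (the paper does this by noting a CM form of conductor $p$ would be supersingular at $p$, contradicting the reducible shape $1\oplus\omega^{k-1}$); in~(d) the paper realises your ``non-CM weight-$2$ specialisation'' concretely by embedding the weight-$1$ theta series for $\Q(\sqrt{-p})$ into a Hida family via Wiles and observing that a CM family would force supersingularity at $p$, contradicting ordinarity.
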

Theorem~\ref{mt} has the following consequence for the bound in 
Question~\ref{Question1}:
\begin{coro}
  If a uniform bound~$B(d)$ exists in Question~\ref{Question1}, then it is of order not less 
  than~$O(\sqrt{d})$.
\end{coro}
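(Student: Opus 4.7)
The plan is to deduce the corollary directly from part~(b) of Theorem~\ref{mt} (the Borel case); part~(d) would work equally well, since its dimension bound is identical.

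First I would invoke Theorem~\ref{mt}(b) to obtain an infinite sequence of primes $p_1 < p_2 < \cdots$ and, for each of them, a rational simple non-CM abelian variety $A_i$ of $\GL_2$-type of some dimension $d_i$ satisfying
$$
d_i \leq \frac{(p_i - 5)(p_i - 7)}{24},
$$
such that $\overline{\rho}_{A_i, \gerP_i}$ has image contained in a Borel subgroup of $\GL_2(\F_{p_i^{n_i}})$ for some prime $\gerP_i$ of $E_i := \End_\Q(A_i) \otimes \Q$ above $p_i$.

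Next I would observe that any Borel subgroup of $\GL_2$ projects to a solvable subgroup of $\PGL_2$, while $\PSL_2(\F_{p_i})$ is simple and non-abelian as soon as $p_i \geq 5$. Hence the projectivized image $\PPP \overline{\rho}_{A_i, \gerP_i}(G_\Q)$ cannot contain $\PSL_2(\F_{p_i})$. If a uniform bound $B(\cdot)$ as in Question~\ref{Question1} exists, it must therefore satisfy $p_i \leq B(d_i)$ for every sufficiently large $i$. Combined with the dimension bound, this gives
$$
B(d_i) \,\geq\, p_i \,\geq\, \sqrt{24\, d_i}\,(1+o(1)) \quad \text{as } i \to \infty,
$$
so $B(d)$ cannot be of order smaller than $\sqrt{d}$ along the subsequence $(d_i)$.

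All the substantive work is already packaged into Theorem~\ref{mt}(b); the present corollary merely repackages that construction as a lower bound on $B$. The only point to verify beyond the invocation of the theorem is the elementary solvability remark above, so no serious obstacle arises.
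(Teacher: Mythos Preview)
Your argument is correct and is precisely the intended deduction: the paper states the corollary immediately after Theorem~\ref{mt} without further proof, and your use of part~(b) (or equivalently~(d)) to extract the inequality $B(d_i)\geq p_i \geq \sqrt{24\,d_i}\,(1+o(1))$ is exactly how the corollary is meant to follow. The only cosmetic point is that ``for every sufficiently large $i$'' can simply be ``for every $i$'' (once $p_i\geq 5$), but this changes nothing.
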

%

%
%
%
\section{Exceptional case: Nebentypus and monodromy}
\label{exceptional}
We first prove part~$(a)$ of Theorem~\ref{mt}, which is actually a generalization of a remark of Serre for the case of elliptic curves 
(see~\cite{Ma76}, p. 36). Our first approach is geometric, though later we give a more automorphic proof in a special case. For a more 
computational approach, see~\cite{KV05}.

\subsection{A geometric approach}

With notations as in Theorem~$\ref{mt}$, let~$A$ be a $d$-dimensional abelian variety corresponding to (the Galois orbit of) a 
weight~2 newform $f(q)=\sum_{n>0} a_n q^n$ of some level~$N$. Though this is not necessary, we remark that, up to replacing~$A$ by an 
isogenous variety, one may assume that ${\mathrm{End}}_\Q (A) ={\cal O}_E$ is the full 
ring of integers of the Hecke field $E := \Q(a_n(f))$, with 
the standard notations. Let~$\gerP$ be a prime of~${\cal O}_E$ above~$p$. Let $K$ be a (totally ramified) extension of~$\Q_p$ over which $A$ 
acquires semistable (i.e., possibly good) reduction, with ring of integers~${\cal O}_K$, and absolute Galois group~$G_K$. It is well-known 
that $e := [K:\Q_p ]$ can be bounded in terms of $d$ only. Indeed, by the Galois criterion for semistable 
reduction (\cite{SGA7}, Th\'eor\`eme~IX.3.6 and Proposition~IX.3.5; see also \cite{ST68}, Theorem~1), $A$ acquires good (respectively, bad semistable) 
reduction over any extension field such that the image of inertia at $p$ on the $\ell$-adic Tate module $T_\ell (A)$ is trivial (respectively, 
unipotent of degree 2), for any $\ell \neq p$. If an element~$\gamma$ in~${\mathrm{Aut}} (T_\ell (A))$ is torsion (respectively, has some 
power which is unipotent of degree two) and is trivial mod $\ell >2$, one readily checks that~$\gamma =1$ (respectively, $\gamma$ is 
unipotent of degree two). It follows that one can choose a $K$ as desired inside of $\Q_p (A[\ell ])$. For~$p>3$, we thus have~$e\leq 
{\mathrm{card}} (\prod_{{\cal Q} |3} \GL_2 (\F_{\cal Q} ))<3^{4d}$. This bound is known to be far from sharp -  a (non-CM) elliptic curve over~$\Q$ acquires 
semistable reduction over a number field of degree dividing 6, and similarly, better bounds for jacobian varieties can be found in~\cite{Liu93}, 
Proposition 4 $(\gamma)$.

  Let us still denote by~$A$ what will actually be its N\'eron model over~${\cal O}_K$. The $\gerP$-adic Tate module $A_\gerP := \varprojlim_n A 
[\gerP^n ]$ is ${\cal O}_\gerP$-free of rank 2 (where~${\cal O}_\gerP$ is the completion of the Hecke ring ${\cal O}_E$ at~$\gerP$), and similarly for~$A [\gerP ]
=A_\gerP \otimes {\cal O}_\gerP /\gerP$ over ${\cal O}_\gerP /\gerP$. We define $A^0 [\gerP ]$ to be the connected part of the latter: this can be seen as 
a~{\it $\F_\gerP$-vector space scheme}, for $\F_\gerP = {\cal O}_\gerP/\gerP$, finite and flat over~${\cal O}_K$, or as a certain subspace in the~$p$-torsion 
of the corresponding formal group. It follows from Raynaud's 
classification of group schemes of type $(p,\dots ,p)$ that the tame intertia subgroup of~$G_K$ 
acts on the semi-simplification of $A^0 [\gerP ]$ via products 
of fundamental characters raised to powers in the range $\{ 0,\dots ,e\}$ (\cite{Ra74}, Corollaire 3.4.4). As we are dealing with 
modular representations, those fundamental characters are actually of level (0 or) 1 or 2 (\cite{Se87}, Proposition 1). We now distinguish the 
following four cases: either
\begin{enumerate}
\item  [$(i)$] $A^0 [\gerP ]$ has dimension 0 over $\F_\gerP$, or 
\item  [$(ii)$] $A^0 [\gerP ]$ has dimension 1 over $\F_\gerP$, or 
\item [$(iii)$] $A^0 [\gerP ]$ has dimension 2 over $\F_\gerP$ (so that~$A^0 [\gerP ]=A[\gerP ]$) and the tame inertia acts on its semisimplification via 
fundamental characters of level 2, or
\item [$(iv)$] $A^0 [\gerP ]$ has dimension 2 over $\F_\gerP$, and the tame inertia acts on its semisimplification via fundamental characters of level 1.
\end{enumerate}

  Assume we are in case $(i)$. Then~$A$ corresponds to a weight~2 eigenform $f$ of some level~$N$ and Nebentypus~$\chi$ which is {\it ordinary} 
at $\gerP$. Let $\omega$ be the Teichm\"uller lift of the cyclotomic character and $I_p$ the inertia subgroup at~$p$. Set~${\bar\chi}_p :=\chi |_{I_p} 
\mod {\gerP}$. It is known that $\bar\rho |_{I_p} :=\rho |_{I_p} \mod {\gerP}$ has shape 
$$
\left( \begin{array}{cc}
{\bar\chi}_p \omega & * \\
0 & 1
\end{array} 
\right) .
$$ 

As we assumed $A^0 [\gerP ]$ has dimension 0 over ${\cal O}_\gerP /\gerP$ (that is, $A [\gerP ]$ is \'etale over ${\cal O}_K$) we must 
have~${\bar\chi}_p =\omega^{-1}$. But this implies~$\chi$ has order divisible by~$p-1$, and as~$E:={\End_\Q (A) \otimes \Q}$ contains the 
values of~$\chi$ one has~$\varphi (p-1 ) \le [E:\Q ] ={\mathrm{dim}} (A)=d$ (where~$\varphi$ is Euler's totient function). We know that
$$
\liminf \frac{\varphi(n)\ln(\ln(n))}{n}=e^{-\gamma}
$$
for~$\gamma =0.577...$ the Euler-Mascheroni's constant (see e.g. \cite{Te08}, Th\'eor\`eme 5.6), or more simply that
$$
\varphi (n) > C_\varepsilon \cdot n^{1-\varepsilon}
$$
for any~$\varepsilon >0$ and~$C_\varepsilon >0$ depending on~$\varepsilon$, so that
$$
p \leq c_\varepsilon \cdot d^{1+\varepsilon}
$$
for some~$c_\varepsilon >0$ depending on~$\varepsilon$. This concludes the proof that our case~$(i)$ does not occur for sufficiently large~$p$ 
relatively to~$d$.  

   Assume now we are in case~$(ii)$. Again~$f$ is ordinary at~$\gerP$ and, by Raynaud's  theorem 
mentioned above,  
the tame inertia of~$G_K$ at $p$ acts on $A^0 [\gerP ]$ via some power $\omega^a$ of the fundamental character of level 1 so that:
$$
 {\overline{{\rho}}_f} \> \vert_{I_{p}} \simeq 
\left(
\begin{array}{cc}
\omega^a & *\\
0 & 1
\end{array}
\right) 
$$
with~$1\leq a\leq e$. This has projective image containing a cyclic group of order~${(p-1)/{\mathrm{gcd}} (a,p-1)  \geq (p-1)/e}$, which cannot be included 
in an exceptional subgroups if~$p> 5e$. Using our estimate for~$e$ we obtain ${p<5 \cdot 3^{4d}}$, whence our claim in case $(ii)$.
   
   The same argument works for case~$(iii)$. Here Raynaud's result implies that the tame inertia of~$G_K$ at $p$ acts on $A[\gerP ]$ via fundamental 
characters of level 2 raised to powers less than~$e$. As we are dealing with modular forms we know more precisely that this inertia action is via the diagonal 
matrix with coefficients $\omega_2^a$ and $\omega_2^{pa}$, with~$1\leq a\leq e$ and~$\{ \omega_2 ,
\omega_2^p \}$ the conjugate pair of fundamental characters of level 2, which have order $p^2 -1$. Therefore~${\overline{{\rho}}_f} \> \vert_{I_{p}}$ has projective 
image a cyclic group of order~${(p+1)/{\mathrm{gcd}} (a,p+1)}  \geq (p+1)/e$, and we again conclude that $p<5 \cdot 3^{4d}$.

Assume finally we are in case~$(iv)$. Here we will prove that the action of the local Galois group~$G_K$ on $A[\gerP ]$ is not diagonal:
it therefore has a $p$-part, the same is true projectively and this implies the image cannot be exceptional if~$p>5$.    

    Let indeed~$\epsilon$ be the ramification index of~$\gerP$ in the endomorphism ring~${\cal O}_{\gerP}$, and~$\pi$ an uniformizing parameter 
 of~${\cal O}_\gerP$: one has~$\gerP = \pi {\cal O}_\gerP$ and~${p {\cal O}_\gerP =\pi^\epsilon {\cal O}_\gerP}$. Let also~$r=[ \F_\gerP 
:\F_p ]$, for $\F_\gerP :={\cal O}_\gerP /\gerP$, be the residual degree. Let $\cal A$ be  the formal group associated with (the Tate module)~$A_\gerP$, 
and call~$\delta$ its dimension (which is the~$\overline{\F}_p$-dimension of its cotangent space over~$\overline{\F}_p$, or simply the ``number 
of variables" of~$\cal A$). One has $\delta =[{\cal O}_\gerP :\Z_p ] =\epsilon r$. The element~$\pi$ can be seen as belonging to~${\mathrm{End}}_{{\cal O}_K} 
({\cal A})$. The action of~$G_K :={\mathrm{Gal}}(\overline{K} /K)$ commutes with multiplication by $\pi$, so there is a Galois-compatible filtration: 
$$
0\subseteq A[\gerP ] \subseteq A[\gerP^2 ] \subseteq \dots \subseteq A[\gerP^{\epsilon -1} ] \subseteq A[\gerP^\epsilon ]
$$
which can be identified with
$$
0\subseteq \pi^{\epsilon-1} {\cal A}[p ] \subseteq \pi^{\epsilon-2} {\cal A}[p ] \subseteq \dots \subseteq \pi {\cal A}[p ]  \subseteq {\cal A}[p ] 
$$
and each subquotient of the above filtrations is isomorphic, as a $G_K$-module, to~${\cal A}[\pi ]\simeq A[\gerP ]$. One therefore sees that,
whereas $\cal A$ has dimension $\delta$, the group scheme~$A[\gerP ]$ (which has rank~$p^{2r}$) has a cotangent space 
(over~${\overline{\F}}_p$) of dimension~$r=\delta /\epsilon$ (``$r$ variables"). 

   By~\cite{Ra74}, Th\'eor\`eme 3.3.3, one knows that, if $p>e+1$, any (finite flat) group scheme of type $(p,\dots ,p)$ over~${\cal O}_K$ is 
uniquely determined by its generic fiber. Therefore, assuming that the local Galois action is diagonal on~$A[\gerP ] $, the latter 
splits uniquely into the direct sum $A_1 \oplus A_2$ of two finite flat group schemes of rank~$p^r$ over ${\cal O}_K$ (and not only~$K$), each of 
which is an ${\F}_\gerP$-vector scheme of rank one. Considering the geometric special fibers we claim that each~$A_j  \times_{{\cal O}_K} 
{\overline{\F}}_p $ is isomorphic to
\begin{align}
\label{equationsAj}
{\mathrm{Spec}} ({\overline{\F}}_p [X_1 ,\dots ,X_r ]/(X_1^p , \dots , X_r^p )) .
 \end{align}
It indeed follows from~\cite{Ra74} that one can take equations for $A_j$ (over ${\cal O}_K$) of the shape~$X_i^p =\delta_i X_{i+1}$, 
for some~$\delta_i \in {\cal O}_K$ whose valuation verifies $0\le v(\delta_i )\le e$ (cf. {\it loc. cit.}, Corollaire 1.5.1 and p. 266). Moreover, Th\'eor\`eme 3.4.1 
of~\cite{Ra74} tells us that the tame inertia~$I_t$ acts on~$A_j$ via
$$
\psi_1^{v(\delta_r )}  \cdot \psi_2^{v(\delta_1 )} \cdot  \cdots  \cdot\psi_r^{v ({\delta_{r-1}})} ,
$$
with notations of {\it loc. cit.}: $\psi_i :=\psi_1^{p^{i-1}}$, and $\{ \psi_i \}_{1\leq i\leq r}$ is the set of conjugate fundamental characters of level~$r$. On the 
other hand, our running hypothesis~$(iv)$ implies that~$G_K$ acts on the~$A_j$ via some power $\omega^a$ of the fundamental character of level 1, 
which can be expressed in terms of the fundamental characters of level~$r$ as:
$$
\omega =\psi_1^{1+p+p^2 +\dots +p^{r-1}} =\psi_1 \cdots \psi_r.
$$ 
Assuming~$e <p-1$, we therefore see that the~$v(\delta_i )$ are all equal to~$a$ (as $0\leq v(\delta_i )\leq e$). Moreover~$a> 0$ (otherwise the 
equations $X_i^p =\delta_i X_{i+1}$ would show $A_j$ is isomorphic over~${\overline{\F}}_p$ to ${\mathrm{Spec}} ({\overline{\F}}_p [X  ]/(X^{p^r} -cX))$, 
for some $c\neq 0$ in~${\overline{\F}}_p$; the~$A_j$ would therefore be \'etale over~${\cal O}_K$, a contradiction). So~$v(\delta_i )>0$ and Raynaud's 
equations~$X_i^p =\delta_i X_{i+1}$ do give our claim~(\ref{equationsAj}) above.

   But this is not compatible with what we know about~$A[\gerP ]\times_{{\cal O}_K} {\overline{\F}}_p$. For instance,~(\ref{equationsAj}) implies all nilpotent 
functions on~$A_1 \oplus A_2$ are killed by raising to the~$p$th power, so if~$A[\gerP ]\times_{{\cal O}_K} {\overline{\F}}_p$ was split, 
it would in turn be a~${\mathrm{Spec}} ({\cal R})$, with~${\cal R} \otimes_{{\cal O}_K} {\overline{\F}_p}$ a quotient of~${\overline{\F}}_p 
[X_1 ,\dots ,X_r ]/(X_1^p , \dots , X_r^p )$ (recall~$A[\gerP ]$ has~$r$ variables). This would be a contradiction with the fact that~$A[\gerP ]$
has rank $p^{2r}$.  

\medskip  
This proves our claim that~$A [\gerP ]$ is nonsplit as a scheme, from which it follows that the local Galois  projective image has order greater or equal 
to~$p$.  Using again that exceptional subgroups have no $p$-subgroups of order larger than~$5$, this concludes the study of case~$(iv)$, and therefore the 
proof of Theorem~\ref{mt}~$(a)$. 
  
\begin{rema}\rm
A generalization of our arguments might show that we even have the existence of upper 
bounds~$B_E (d,\delta )$ depending on~$d$ and~$\delta$, such that if $A$ is a simple abelian variety of dimension~$d$ which is of $\GL_2$-type 
over a number field~$K$ of degree~$\delta$, and~$A$ is endowed with a Galois structure of exceptional type in prime characteristic~$p$, then
$$
p<B_E (d,\delta )
$$ 
(as is known for elliptic curves). The only problem in generalizing 
our arguments seems to be in proving that the torsion has a non-trivial connected 
part (as a group scheme over~$\overline{\Z}_p$); see case $(i)$ in this paragraph. 
\end{rema}

\begin{rema}\rm
It might also help to briefly recall how things work in the technically simpler setting of elliptic curves. Case~$(i)$ does not occur, as 
the~$p$-torsion is never \'etale in this case. Cases~$(ii)$ and~$(iii)$ can actually be copied with no change. For case~$(iv)$, instead of specializing the 
method above (see, for instance, in~\cite{Mo84}, the proof of Lemma~1.3), one can use Serre's study of the one-parameter formal group 
defined by an elliptic curve (\cite{Se72}, paragraph~1.9 and~1.10). If the elliptic curve in question is supersingular but the tame inertia acts 
via powers of the fundamental character of level 1, it indeed follows from  {\it loc. cit.} that the relevant Newton polygon is broken, and there are 
points in the $p$-torsion of the corresponding formal group which have valuation with denominator divisible by~$p$ (see~\cite{Se72}, 
p.~272). This precisely means that the Galois extension cut out by the $p$-torsion of the elliptic curve has degree divisible by~$p$. Therefore 
the Galois action is non-diagonal, and the projectivization of its image again has a non-trivial~$p$-part. 
This approach might be generalizable to higher-dimensional abelian varieties and formal groups, up to some more technicalities.  
  
   We also note that an elliptic curve over a number field $K$ acquires semi-stable reduction over an 
extension of~$K$ of degree dividing $12$. Therefore, the associated mod~$p$ representations is not projectively exceptional as soon 
as ${p>60[K:\Q ] +1}$: see~\cite{Ma76}, p. 36. (There Mazur asserts that for elliptic curves over~$\Q$, $p>13$ would even do.)    
      
\end{rema} 

\subsection{An automorphic approach, when ${\mathrm{val}}_p (N) \leq 1$}
\label{Conductorp}

For forms of weight 2 and conductor having $p$-adic valuation at most $1$, one can give a purely automorphic proof of part (a) of Theorem 1.4.
This proof has the virtue of appealing to more 
modern technology, but does not cover all cases, since the mod $p$ reductions of forms with high powers of $p$ in the level are not yet fully known.

Suppose $f \in S_2(\Gamma_1(N))$. 
Assume that the power of $p$ dividing $N$ is at most 1.  We show that  $\bar\rho_f$ cannot have exceptional 
projective image, if the dimension of the corresponding abelian variety is bounded, for~$p$ sufficiently large 
depending on the dimension.

First assume that $N$ is prime to $p$. We show that for $p$ sufficiently large, $\bar\rho_f$ on $I_p$ has large projective image irrespective of
the dimension of the underlying abelian variety. Indeed, since $N$ is prime to $p$, then (e.g., see \cite{RS03})
the Serre weight of $\bar\rho_f$ is 2. If $f$ is ordinary at $p$, then it is well-known that on $I_p$,
$\bar\rho_f$ is of the form
\begin{eqnarray*}
  \begin{pmatrix}
    \omega  &   * \\
        0        &  1 
  \end{pmatrix}
\end{eqnarray*}
which has projective image a group of order divisible by $p - 1$. On the other hand, if $f$ is non-ordinary at $p$, then by Fontaine's theorem 
(\cite{Ed92}, Theorem 2.6) (which applies, since the Serre weight $k$ satisfies $2 \leq k \leq p + 1$), $\bar\rho_f$ on $I_p$ has the form 
\begin{eqnarray*}
  \begin{pmatrix}
    \omega_2   &       0  \\  
        0             &      \omega_2^p 
  \end{pmatrix}
\end{eqnarray*}
which has projective image a cyclic group of order $p + 1$. 
If the global projective image of $\bar\rho_f$ is of exceptional
type, then, in either case, this cyclic group cannot have cardinality
larger than 5, so $p \geq 7$ cannot occur.

Now suppose that $p$ exactly divides $N$. Assume $\bar\rho_f$ has exceptional projective image. If there is no Nebentypus at $p$, then $f$ 
is of Steinberg type at $p$, and in particular ordinary, so the projective image on $I_p$ is again of order divisible by $p - 1$,
and so $p \leq 5$. So we may assume that the Nebentypus at $p$ is $\omega^j$ with
$1 \leq j \leq p - 2$. Then, by Proposition 6.18 of~\cite{Sa05}, $\rho_f$ on 
$I_p$ has one of the following three shapes:

\begin{eqnarray*}
  \begin{cases}
    \begin{pmatrix} 
      \omega^{j+1}   &  * \\
         0                    &  1
    \end{pmatrix} & \text{  if } v_p(a_p) = 0, \\
    \begin{pmatrix}
      \omega          &      *   \\
         0                  &   \omega^j
    \end{pmatrix} & \text{  if  } v_p(a_p) = 1, \\
    \begin{pmatrix}
      \omega_2^{j+1}    &   0  \\
         0                        &    \omega_2^{p(j+1)}
    \end{pmatrix} & \text{  if  } 0 < v_p(a_p) < 1. 
  \end{cases}
\end{eqnarray*}
In the first case we see that the order of $\omega^{j+1}$ must be smaller than 5, so that the order of $\omega^j \geq \frac{p-1}{5}$. In 
particular $d \geq \varphi (\frac{p-1}{5} )$. Thus if $p > O( d^{1+\varepsilon} )$, then the exceptional case does not occur. In the second case,
the projective image of $I_p$ on the diagonal is the image of $\omega^{j-1}$ and a similar argument applies. In the 
last case, the projective image of $I_p$ is the image of $\theta := \omega_2^{(p-1)(j+1)}$.
We claim that if this has order at most 5, then $\omega^{j}$ has order at least $\frac{p-1}{3}$, so that we are again done, except in
one case which we treat separately below. Indeed, the order of $\theta$ is $\frac{p+1}{g}$ where $g$ is the greatest common divisor 
of $j+1$ and $p+1$. If the order of $\theta$ is at most 5, then $\frac{p+1}{5} \leq g \leq \frac{p+1}{2}$. Writing $j + 1 = mg$ and $p+1 = ng$, 
for some $1 \leq m < n \leq 5$ (note $j + 1 < p+1$), we have $j + 1 = \frac{m(p+1)}{n}$ for $2 \leq n \leq 5$ and $1 \leq m  <n$, with $(m,n)=1$. An easy 
check shows that for these finitely many values of $j$, the greatest common divisor of $j$ and $p -1$ is at most 3, so that the order of $\omega^j$ is 
at least $\frac{p-1}{3}$, as desired, except when $j+1 = \frac{p+1}{2}$, in which case $\theta$ is quadratic, but $\omega^j$ is also quadratic.

We claim, however that this last subcase cannot occur for sufficiently large~$p$. To see this
suppose that there is an exceptional type form in 
$S_2 (\Gamma_0 (Mp), \chi )$, where $M$ is prime to $p$ and the $p$-part $\chi_p$ of $\chi$
is quadratic. Then consider the Teichm\"uller lift of the 
associated mod $p$ representation (this exists since for
$p > 5$ the mod $p$ representation has order prime to $p$,
since it is an extension of an exceptional group by a subgroup
of scalars of order $p^n - 1$, for some $n$). 
We obtain an odd finite image representation into
${\mathrm GL_2}(W)$, with $W$ is the ring of Witt vectors of the residue
field, which by the recent proof of 
Artin's conjecture (which in turn follows from the proof
due to Khare-Wintenberger/Kisin of Serre's conjecture \cite{KW09}, and 
Khare's proof that Serre implies Artin \cite{Khare}), we know 
comes from a form in 
$S_1 (\Gamma_0 (Mp), \chi')$. Comparing determinants mod $p$  we see that
the $p$-part of $\chi'$ must be
$\omega^{(p+1)/2}$. But an elementary argument 
(see \cite{BG09}, Proposition 5.1) shows
that the Nebentypus at $p$ of an exceptional weight
1 form which is tamely ramified at $p$ must be of bounded
order, which is clearly impossible if $p$ is large. 

This completes the proof.

\section{Borel subgroups and irregular primes}
\label{Borel}
We check part~$(b)$ of Theorem~\ref{mt}. We know from the work of K. L. Jensen (\cite{Je15}; or~\cite{Wa97}, Theorem 5.17) that 
there are an infinite number of irregular primes, that is, primes~$p$ dividing the order of the class group~$C_p$ of~$\Q (\mu_p )$. 
Kummer proved that they are exactly  the primes dividing the numerator of some Bernoulli number~$B_k$ with~$k$ even, $2\leq k\leq 
p-3$, and Herbrand showed more precisely that if~$p$ divides the order of the~$\omega^{1-k}$-isotypic component~$C_p (\omega^{1-k})$ 
of~$C_p$, then~$p$ divides~$B_k$ (where~$\omega$ is as before the cyclotomic character). In his celebrated and 
seminal paper~\cite{R76a} Ribet proved the converse to Herbrand's criterion, and to that end he showed that when~$p\vert B_k$, there 
is a newform in~$S_2^{\mathrm{new}} (\Gamma_0 (p),\omega^{k-2})$ whose associated abelian variety~$A_f$ has a $p$-isogeny. We 
claim those abelian varieties are not of CM type. One way to see this goes as follows. Assuming~$f$ is CM, level considerations show 
that the associated quadratic imaginary field~$K$ has discriminant~$-p$. As~$p$ ramifies in~$K$, the classical theory of complex 
multiplication implies~$f$ is supersingular above~$p$. Now Ribet's representation has 
semi-simplification~$1\oplus \omega^{k-1}$, with~$2\leq k\leq p-3$ (see~\cite{R76a}, Proposition 4.2). Therefore $A_f$
has ($\Z_p$-\'etale subgroups of) $p$-torsion points, and cannot be supersingular at $p$. Invoking finally the fact that
$$
{\mathrm{dim}}(J_1 (p))=(p-5)(p-7)/24
$$ 
(this follows for instance from~\cite{Sh71}, Chapter 2) we conclude the proof. 
\begin{rema}\rm
Of course Ribet's representation shows the existence of $p$-torsion points, not
only a $p$-isogeny, on~$A_f$.
\end{rema}\rm

\begin{rema}\rm
If we can choose $k$ and $p$ such that
$\omega^{k-2}$ has order tending to infinity, then, the above construction gives an infinite sequence
of abelian varieties of $\mathrm{GL}_2$-type of dimension tending to $\infty$, which are 
residually mod $p$ of Borel type, with $p$ tending to $\infty$. 
\end{rema}
\section{Dihedral cases}

\subsection{Using level raising}
We prove part~$(c)$ of Theorem~\ref{mt}.
\begin{propo}
\label{levelraising}
There exists an absolute constant~$C$ such that, for each prime ${p\equiv 3\mod 4}$ (respectively,~$p\equiv 1\mod 4$), there is a non-CM 
abelian variety with a normalizer of nonsplit Cartan (respectively, split Cartan) Galois structure mod $p$ and dimension ${d\leq C\, p^{5.5}}$.
\end{propo}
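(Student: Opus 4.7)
The strategy is to start with a fixed CM newform whose mod-$p$ residual representation already has dihedral image of the desired (non)split Cartan type, and to apply Ribet's level raising at an auxiliary prime~$q$ to move to a non-CM newform with isomorphic mod-$p$ image. The dimension of the resulting modular abelian variety is controlled by~$q$, and $q$ in turn by Heath-Brown's effective form of Linnik's theorem on the least prime in an arithmetic progression---this is where the exponent $11/2$ enters.

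Fix $f_0 \in S_2(\Gamma_0(32))$ to be the weight-2 newform attached to the elliptic curve $y^2 = x^3 - x$, which has complex multiplication by $K=\Q(i)$. For all sufficiently large primes~$p$, $\bar\rho_{f_0,p}$ is absolutely irreducible and dihedral with respect to~$K$, so its image lies in the normalizer of a Cartan subgroup of $\GL_2(\F_p)$, \emph{nonsplit} when $p\equiv 3\pmod 4$ (so $p$ is inert in~$K$) and \emph{split} when $p\equiv 1\pmod 4$ (so $p$ splits in~$K$). This is precisely the image type sought in the proposition; the task reduces to transferring it to a non-CM newform without losing control of the level.

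To apply Ribet's level raising (\cite{Ri90}, Theorem~7.3) at a prime $q\nmid 2p$, one needs the congruence $a_q(f_0)^2 \equiv (q+1)^2 \pmod p$. Since $f_0$ is CM by~$K$, $a_q(f_0)=0$ whenever $q$ is inert in~$K$, i.e.\ $q\equiv 3\pmod 4$; in that case the level-raising condition collapses to $q\equiv -1\pmod p$. These two constraints cut out one residue class modulo~$4p$, so Heath-Brown's effective version of Linnik's theorem produces a prime $q\leq C\cdot p^{11/2}$ for an absolute constant~$C$. Level raising then yields a newform $g\in S_2(\Gamma_0(32q))^{\mathrm{new}}$ and a prime $\lambda\mid p$ of its Hecke field with $\bar\rho_{g,\lambda}\simeq \bar\rho_{f_0,p}$, so that $\bar\rho_{g,\lambda}$ has image in the normalizer of the desired Cartan. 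The associated abelian variety $A_g$ is a quotient of $J_1(32q)$, hence $\dim A_g = O(q) = O(p^{11/2})$, matching the claim.

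The main technical point---and the step that needs care---is ruling out that $g$ is itself CM. Since $g$ is new at $q$ with trivial Nebentypus, it is Steinberg at~$q$, giving $a_q(g)=\pm 1 \neq 0$, which immediately excludes CM by any imaginary quadratic field in which $q$ is inert. The remaining candidates are $K' = K = \Q(i)$, excluded by a conductor calculation (the required Hecke character of $\Z[i]$ would have conductor norm~$8q$, but $q$ inert forces every prime of $\Z[i]$ above~$q$ to have norm~$q^2$, so no such character exists), and $K' = \Q(\sqrt{-q})$, excluded by observing that $\bar\rho_g\simeq \bar\rho_{f_0}$ is already dihedral with respect to~$K$: for large~$p$ the image of $\bar\rho_{f_0}|_{G_K}$ is cyclic of order growing like $\Omega(p)$, which is too large to be simultaneously contained in a Cartan attached to the distinct field~$K'$ (their common subgroup inside $\GL_2(\F_p)$ consists only of scalars). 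Any improvement of Linnik's constant would mechanically sharpen the exponent $11/2$ in the proposition.
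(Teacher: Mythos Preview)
Your approach is essentially identical to the paper's: start from a CM elliptic curve over~$\Q$ with complex multiplication by $\Q(i)$ (the paper uses the curve of conductor $2^6$, you use conductor $32$---a harmless twist), apply Ribet's level raising at a prime $q\equiv 3\pmod 4$, $q\equiv -1\pmod p$ found via Heath-Brown's effective Linnik bound, and read off the split/nonsplit type from $p\bmod 4$.

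The only substantive difference is the non-CM step, where your argument is more elaborate than needed and, as written, not quite complete. The paper settles this in one line: since $g$ is new at $q$ with trivial Nebentypus, $A_g$ has multiplicative (semistable bad) reduction at~$q$, whereas any CM abelian variety has potentially good reduction everywhere. Your case analysis is partly redundant---you list $K'=\Q(i)$ among the ``remaining candidates'' although $q$ is inert there by construction, so your own observation $a_q(g)=\pm 1\neq 0$ already rules it out---and it does not cover all candidate CM fields of discriminant dividing $32q$ (for instance $\Q(\sqrt{-2})$ or $\Q(\sqrt{-2q})$, in which $q$ may split or ramify depending on $q\bmod 8$). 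Your image-theoretic argument for $\Q(\sqrt{-q})$ would in fact extend to any $K'\neq\Q(i)$, but the semistable-reduction observation is both shorter and uniform; you should replace the case analysis with it.
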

\begin{proof}
Let $D$ be the discriminant of an imaginary quadratic number field~$K$ with ring of integers~${\cal O}_K$. Let~$A$ be a simple~$\Q$-abelian 
variety of $\GL_2$-type having complex multiplication by~${\cal O}_K$ (take for instance the Weil restriction to~$\Q$ of the Galois conjugacy class 
of elliptic curves over the Hilbert class field~$H_K$ of~$K$) 
and let ~$f =\sum_{n} a_n(f) q^n$ the 
CM newform associated with $A$, with conductor~$N_D$. Assume for simplicity that~$f$ has trivial Nebentypus. Let~$p$ be a prime which does 
not divide~$N_D$. For~$\ell$ another prime such that~$\ell \equiv -1\mod p$ and~$\ell$  remains inert in $K$ one has $a_\ell(f) = 0\equiv (\ell +1)
\mod p$, so Ribet's theorem~(\cite{Ri90b}, Theorem 1) shows that~$p$ is a congruence prime between~$f\in S_2^{\mathrm{new}} (\Gamma_0 (N_D ))$ 
and some~$g\in S_2^{\mathrm{new}} (\Gamma_0 (\ell N_D ))$.  Let~$B$ be the abelian variety over~$\Q$ associated with~$g$. Having semi-stable 
bad reduction at~$\ell$, it cannot have complex multiplication. The dimension of~$B$ is bounded above by~${\mathrm{dim}} 
(S_2^{\mathrm{new}} (\Gamma_0 (\ell N_D )))$, which is of shape $\lambda \ell N_D +o(\ell N_D )$ for some~$\lambda\in \Q$ 
(\cite{Sh71}, Chapter 2). Now Linnik's theorem, in the improved explicit version proved by Heath-Brown, shows that one can take 
$\ell \leq c\,  p^{5.5}$, for some constant~$c$ depending only on~$D$ (see e.g.~\cite{IK04}, Theorem 18.1, and the references 
therein). Fix for instance~$D=-4$ in the above, and take for~$A$ the elliptic curve over~$\Q$ with~$j$-invariant~$1728$, conductor~$2^6$, 
having multiplication by the Gaussian integers~$\Z [i]$. The condition~$p\equiv 3 \mod 4$ (respectively,~$p\equiv 1 \mod 4$) insures~$B$ 
has a normalizer-of-{\it nonsplit}-Cartan Galois structure mod $p$ (respectively, normalizer-of-{\it split}-Cartan Galois structure), and
our statement follows. $\Box$  
\end{proof}
\begin{rema}\rm
We understand from~\cite{IK04}, Chapter 18 that the exponent~$5.5$ used here in Linnik's theorem seems to be conjecturally improvable, but not 
under~$2$. We will see in the next section that some elementary Hida theory allows us to produce examples of abelian varieties with projectively dihedral 
Galois structure mod $p$, and dimension bounded above by~$O(p^2 )$. On the other hand, these techniques do not allow
us to distinguish easily between the split and nonsplit cases.  
\end{rema}
\subsection{Using Hida families}
    We first recall a few standard facts on~$\Lambda$-adic modular forms. Fix $p$ an odd prime number, and an embedding~$\overline{\Q} 
\hookrightarrow \overline{\Q}_p$ (which will allow us to think of elements of~$\overline{\Q}$ as living in~$\overline{\Q}_p$). Set~$\Lambda =\Z_p [[X]]$ 
and let~$L$ be the ring of integers of a finite extension of~${\mathrm{Frac}} (\Lambda )$.  An {\it arithmetic point}~$P_{k,\zeta_r}$ is a 
morphism $L \rightarrow \overline{\Q}_p$ of $\Z_p$-algebras extending the algebra homomorphism $\Lambda \rightarrow \overline{\Q}_p$ 
induced  by $X\mapsto \zeta_r (1+p)^{k-1} -1$, with~$2\le k\in \N$ and~$\zeta_r$ a primitive~$p^{r-1}$-th root of unity, $r\geq 1$.
If~${N=N_0 p}$ with~${\mathrm{gcd}} (N_0 ,p)=1$, let~$\psi \colon (\Z /N_0 p\Z )^* \to \overline{\Q}^*$ be a Dirichlet 
character. Let~$\chi_{\zeta_r} \colon (\Z /p^{r} \Z )^* \to \overline{\Q}^*$ be the character which, under the 
decomposition
$$
(\Z /p^{r}\Z )^* \simeq \Z /(p-1)\Z \times \Z /p^{r-1} \Z ,
$$ maps the first factor to $1$ and the generator~$(1+p)$ of the second factor to~$\zeta_r$. 

   A $\Lambda$-adic cusp form of tame character~$\psi$ is a formal series
$$
F(X,q) =\sum_{n\ge 0} a_n (X) q^n  \in L[[X]] 
$$    
such that the specialization~$f_{k,\zeta_r}$ at any arithmetic point~$P_{k,\zeta_r}$ belongs to the space of modular 
forms~$S_k (\Gamma_0 (Np^r ), \psi \omega^{1-k} \chi_{\zeta_r} )$. (This is the weight normalization adapted to ``deformations
of weight 1".) Such a form is said to be 
a~$\Lambda$-adic newform if all its arithmetic specializations are $p$-stablilized $N_0$-newforms.  
   A fundamental theorem of Hida asserts that one can attach to such an eigenform~$F$ a representation
$$
\rho_F \colon \Gal (\overline{\Q} /\Q ) \to \GL_2 ({\mathrm{Frac}} (L))
$$
which can be seen as a family~${P_{k,\zeta_r} (\rho_F )=\rho_{P_{k,\zeta_r (F)}} =\rho_{f_{k,\zeta_r}}}$ of representations 
interpolating those associated by Eichler-Shimura and Deligne to the classical eigenforms~$f_{k,\zeta_r}$ at arithmetic points. 
The weight 1 specializations do give rise to Galois representations too, but they might or might not correspond to classical 
modular forms via Deligne-Serre theory. One checks that the restriction to an inertia group~$I_p$ at~$p$ is of shape
$$
\rho_{F\vert I_p} \simeq \left(
\begin{array}{cc}
\psi \kappa & * \\
0 & 1
\end{array}
\right) 
$$
where~$\kappa$ is the character $\kappa \colon \Gal (\bar\Q / \Q )\twoheadrightarrow \Gal (\Q (\mu_{p^\infty} ) /\Q )
\twoheadrightarrow \Gal (\Q_{\infty} /\Q ) \to \Lambda^*$ which maps the topological generator~$1+p$ of~$1+p\Z_p 
\simeq \Gal (\Q_{\infty} /\Q )$ to $(1+X)\in \Lambda^*$. The mod $p$ representations~$\overline{\rho}_{f_{k,\zeta_r}} 
:=\rho_{f_{k,\zeta_r}} \mod p$ are all isomorphic when irreducible (and, in any case, have the same 
semi-simplification). 

  Let us finally prove part $(d)$ of Theorem~\ref{mt}. Let~$p$ be a prime number equal to $3\mod 4$, so that~$\Q (\sqrt{-p})$, whose ring of 
integers we denote by~${\cal O}_{\Q (\sqrt{-p} )}$, has discriminant $-p$. 
Let~$\alpha_{-p} :=\left( \frac{-p}{\cdot}\right)$ be the corresponding 
quadratic  character. The class number formula shows that $h({\cal O}_{\Q (\sqrt{-p} )})$ is prime to~$p$, as it is bounded above by $\frac{p-1}{2}$. 
The Brauer-Siegel theorem in the case of quadratic imaginary fields actually yields that, for any~$\varepsilon >0$,
$$
p^{1/2 -\varepsilon} < h ({\cal O}_{\Q (\sqrt{-p} )} )< p^{1/2 +\varepsilon} 
$$ 
if~$p$ is large enough, so that in particular~$h({\cal O}_{\Q (\sqrt{-p} )} )$ is 
non-trivial (and, again, prime to $p$) for large enough~$p$. Choose
$$
\Psi \colon \Gal (H(\Q (\sqrt{-p})) /\Q (\sqrt{-p})) \to \overline{\Q}^* \subseteq \overline{\Q}_p^*
$$
a nontrivial character, where $H(\Q (\sqrt{-p}))$ is the Hilbert class field of $\Q (\sqrt{-p})$.
Let
$$
f_\Psi (q)=\sum_{\gerA \subset {\cal O}_{\Q (\sqrt{-p})}} \Psi (\gerA ) q^{N(\gerA )}
$$
be the theta series associated with~$\Psi$. This is a (classical) eigenform of level $p$, weight~$1$ and Nebentypus~$\alpha_{-p}$ 
(cf. e.g.~\cite{Hi93}, paragraph 7.6). The associated Galois representation~$\rho_{f_\Psi}$ is~${\mathrm{Ind}}^\Q_{\Q (\sqrt{-p})} 
(\Psi)$, whose image is included in the normalizer of a Cartan subgroup, but not the Cartan itself (and as~$\Psi$ has prime-to-$p$ 
order, the same is true for~$\overline{\rho}_{f_\Psi} =\rho_{f_\Psi} \mod p$). 
 
  A result of Wiles~(\cite{Wi88}, Theorem 3), generalizing Hida theory for arithmetic points, says that classical eigenforms of weight 1 can also be 
  embedded in $\Lambda$-adic eigenfamilies. Let~$F$ be one such form passing through~$f_\Psi$. This~$F$ 
does not have complex multiplication (i.e., no arithmetic member has complex multiplication), for similar reasons as in paragraph~\ref{Borel}. 
Indeed, if that were the case, a look at the ramification shows that the CM field would have to be~$\Q (\sqrt{-p})$, in which~$p$ ramifies: so the 
weight 2 members of the family would be supersingular, whereas arithmetic specialization of a Hida family are ordinary. Let~$\psi \omega^{1-k} 
\chi_{\zeta_r}$ be the decomposition of the Nebentypus at~$P_{k,\zeta_r}$, using the same notations as in the beginning of this paragraph. 
The tame level~$N_0$ of~$F$ is $1$ and~$\psi$ is some power $\omega^a$ of the Teichm\"uller character~$\omega$. Together with the fact 
that $f_\Psi$ has Nebentypus~$\alpha_{-p}$, we see that $a=(p-1)/2$, which implies that~$P_{2,1} (F)$ is a newform in~$S_2 (\Gamma_0 (p),
\omega^{(p-3)/2} )\subseteq S_2 (\Gamma_1 (p))$. We have therefore built some rational simple abelian variety~$A$ of~$\GL_2$-type, endowed 
with a nontrivial normalizer-of-Cartan structure mod~$p$, which is isogenous to a quotient of~$J_1 (p)$. The shape of the Nebentypus and the 
known dimension of~$J_1 (p)$ give the announced bounds
$$
\varphi \left( \frac{p-1}{2}  \right) \leq {\mathrm{dim}} (A)\leq \frac{(p-5)(p-7)}{24} . \ \ \ \ \ \ \ \ \Box
$$  
\begin{rema}
\label{RemarkOrder2}
\rm
Note that dihedral groups are ambiguously defined in the case when the projective image is the 
Klein 4-group $(\Z /2\Z )^2$, when there are three possible choices for the cyclic subgroup,
but this is not the case for the Galois group built here. Indeed, if it were, one would have two quadratic subextensions of ${\mathrm{Gal}}
(\Q (A[p ])/\Q )$ apart from~$\Q (\sqrt{-p} )$. But the only allowed ramification locus for those number ields is $p$, a contradiction. (Of course,
this yields a proof that $\Q (\sqrt{-p})$ is odd when $p\equiv 3\mod 4$, a classical fact (see e.g. \cite{BS}, Th\'eor\`eme 4 on p. 388).)   
\end{rema} 
\begin{rema}
\label{RemarkFusion}
\rm
Let~$A$ be a~$\GL_2$-type abelian variety as in Question~\ref{Question2}, endowed with a~$SP$ or~$NSP$ Galois structure mod $\gerP$. 
Then $\overline{\rho}_{A,\gerP}$ is the induced representation ${\mathrm{Ind}}_K^{\Q} (\overline{\psi} )$ of some character
$\overline{\psi}$ with values in a finite field $\F$. Let~$\psi \colon \mathrm{Gal} (\overline{\Q} /\Q )\to \overline{\Q}^*$ be
the character deduced from~$\overline{\psi}$ by Teichm\"uller lift. If~$K$ is an imaginary quadratic field (as is necessarily
the case, for instance, for $NSP$ structures when~$A$ is an elliptic curve, as one checks with a look at the image of a complex conjugation) 
then~$\psi$ gives rise to a CM abelian variety~$A_\psi$, whose induced representation~$\rho_{A_\psi ,\gerP}$ is~${\mathrm{Ind}}_K^{\Q} 
(\psi )$. It is known since Hecke that~$A_\psi$ is modular and, as noticed in the Introduction, it follows from Ribet and 
Khare-Wintenberger/Kisin that the same 
is true for $A$. Denoting by~$N$ and~$N_\psi$ the conductors of~$A$ and~$A_\psi$ respectively, one checks that~$N_\psi \vert N$, so 
both abelian varieties can be seen as irreducible components in the spectrum~$\Spec (\T_{\Gamma_1 (N)} )$ of the
full Hecke algebra~$\T_{\Gamma_1 (N)}$ for~$\Gamma_1 (N)$ in weight 2, and by construction those components
intersect in characteristic $p$ (which therefore is a {\it congruence prime} for the cuspforms
$f_A$ and $f_{A_\psi}$ associated to $A$ and $A_\psi$).

   Question~\ref{Question2} then specializes to: if a non-CM irreducible component of some~$\Spec (\T_{\Gamma_1 (N)} )$
intersects a CM one in characteristic~$p$, is it true that the degree~$d$ of the former component has to be such that $B_{SP} (d)$  
or~$B_{NSP} (d)$ are larger than~$p$? We note in passing that sharp bounds for congruence primes of irreducible
components of degree one (that is, elliptic curves) are closely related to deep problems such as modular degree conjecture, 
or $abc$ conjecture (see e.g.
~\cite{Fr89} or \cite{Ma95}). Of course, the connectedness of Hecke's algebra spectra 
in weight~2 (``spaghetti principle", see~\cite{Ma76}, Proposition 10.6) shows that some intersection between CM and non-CM components 
has to occur - but {\it quantifying the primes of fusion} is the hard part of the story. We understand that experimental data seem to 
indicate that a large part of fusion occurs in characteristic 2 (cf. e.g. the remark on page 11 of~\cite{MW10}. A typical drawing of the 
situation can be found in~\cite{RS03}, pp. 40-41). 
\end{rema}
 {\footnotesize

}

\end{document}